\pgfplotsset{compat=1.6}
\newcommand\blfootnote[1]{%
  \begingroup
  \renewcommand\thefootnote{}\footnote{#1}%
  \addtocounter{footnote}{-1}%
  \endgroup
}
\renewcommand{\Re}{\operatorname{Re}}
\renewcommand{\Im}{\operatorname{Im}}
\DeclareMathOperator{\dist}{dist}
\DeclareMathOperator{\supp}{supp}
\DeclareMathOperator{\pv}{p.v}
\DeclareMathOperator{\C}{\mathcal{C}}
\theoremstyle{plain}
\newtheorem{theor}{Theorem}
\theoremstyle{plain}
\newtheorem*{theor*}{Theorem}
\theoremstyle{plain}
\newtheorem{prop}{Proposition}
\theoremstyle{remark}
\newtheorem{rem}{Remark}
\theoremstyle{definition}
\newtheorem{defin}{Definition}
\theoremstyle{plain}
\theoremstyle{remark}
\theoremstyle{plain}
\newtheorem{lemm}{Lemma}
\numberwithin{equation}{section}
\begin{document}


\title{Measures that define a compact Cauchy transform}


\author{Carmelo Puliatti}
\address{BGSMath and Departament de Matematiques, Universitat Autonoma
de Barcelona, 08193, Bellaterra, Barcelona, Catalonia}
\email{puliatti@mat.uab.cat}

\begin{abstract}
The aim of this work is to provide a geometric characterization of the positive Radon measures $\mu$ with compact support on the plane such that the associated Cauchy transform defines a compact operator from $L^2(\mu)$ to $L^2(\mu).$ It turns out that a crucial role is played by the density of the measure and by its Menger curvature.
\end{abstract}
 \maketitle
 
\section{Introduction}
\blfootnote{\textit{2010 Mathematics Subject Classification.} 42B20, 28A80.\\
Partiallly supported by MTM-2016-77635-P,  MDM-2014-044 (MICINN, Spain), 2017-SGR-395 (Catalonia), and by Marie Curie ITN MAnET (FP7-607647).}
In what follows we will identify the plane with the complex field $\mathbb{C}.$ Let $\mu$ be a positive Radon measure on $\mathbb{C}$ with compact support and without atoms. For $\epsilon>0$, $f\in L^1_{loc}(\mu)$ and $z\in\mathbb{C}$ we set
\begin{equation}
\C_{\epsilon,\mu}f(z):=\int_{|z-w|>\epsilon}\frac{f(w)}{z-w}d\mu(w).
\end{equation}
We define the Cauchy transform operator $\C_{\mu}$ in a principal value sense, i.e., as the limit
\begin{equation}
\C_{\mu}f(z):=\lim_{\epsilon\to 0}\C_{\epsilon,\mu}f(z)
\end{equation}
for every $z$ such that the above limit exists. We say that the Cauchy transform is bounded from $L^2(\mu)$ to $L^2(\mu)$ if the truncated operators $\C_{\epsilon,\mu}:L^2(\mu)\to L^2(\mu)$ are bounded uniformly in $\epsilon$. 
As a consequence of the work of Mattila and Verdera (see \cite{mattila_verdera_jems} or the book by Tolsa \cite[Chapter 8]{tolsa_book}), the Cauchy transform is bounded from $L^2(\mu)$ to $L^2(\mu)$ if and only if the truncated operators $\{\C_{\epsilon,\mu}\}_\epsilon$ converge as $\epsilon$ tends to $0$ in the weak operator topology of the space of bounded linear operators from $L^2(\mu)$ to $L^2(\mu)$. Moreover, if we denote as $\C_\mu^w$ the limit of the aforementioned net, for all $f\in L^2(\mu)$ and for almost all $z$, the principal value $\C_\mu f(z)$ exists and it coincides with $\C_\mu^w f(z)$. This is a peculiarity of the Cauchy transform and it does not hold for every singular integral operator.
Now, it makes sense to introduce the following definition. 
\begin{defin}\label{def_compact_cauchy}We say that the Cauchy transform is compact from $L^2(\mu)$ to $L^2(\mu)$ if it is bounded in $L^2(\mu)$ and $\C^w_\mu$ is compact as an operator from $L^2(\mu)$ to $L^2(\mu)$.
\end{defin}
As a consequence of the results we cited, one may replace $\C^w_\mu$ in Definition \ref{def_compact_cauchy} with the principal value $\C_\mu$.
A useful tool to study the Cauchy transform of a measure $\mu$ is the so-called Menger curvature $c(\mu)$, that was first related to the Cauchy transform in \cite{melnikov} and \cite{MV}. 
Denoting by $R(z,w,\zeta)$ the radius of the circumference passing though $z,w$ and $\zeta$, and defining
\begin{equation}
 c^2_{\mu}(z):=\int\int\frac{1}{R(z,w,\zeta)^2}d\mu(w)d\mu(\zeta),
\end{equation}
the Menger curvature of $\mu$ is defined as
\begin{equation}
c^2(\mu):=\int c^2_\mu(z)d\mu(z).
\end{equation}
Let $d,n\in\mathbb{N}$ with $n\leq d.$ Given a cube $Q$ in $\mathbb{R}^d,$ we denote by $l(Q)$ its side length and by
\begin{equation}\label{density_cu}
\Theta^n_\mu(Q):=\frac{\mu(Q)}{l(Q)^n}
\end{equation}
its $n-$dimensional density. If $z\in\mathbb{R}^d,$ we define the upper density of $\mu$ at $z$ as
\begin{equation}\label{sup_dens}
\Theta^{n,*}_{\mu}(z):=\limsup_{l(Q)\to 0}\Theta^n_\mu(Q),
\end{equation}
where $Q$ spans over the cubes centered at $z$. Replacing the superior limit with the inferior limit we get the definition of the lower density $\Theta^n_{*,\mu}(z).$ If $\Theta^{n,*}_\mu(z)=\Theta^n_{*,\mu}(z),$ we denote that common value as $\Theta^n_{\mu}(z)$ and call it "density of $\mu$ at the point $z$". In the case $d=2$ and $n=1$, for brevity we write $\Theta_\mu(Q):=\Theta^1_\mu(Q)$ and we omit the index $n$ from the notation for the upper and lower densities at any point.\\
The aim of the present work is to characterize the measures $\mu$ on the plane such that its associated Cauchy transform defines a compact operator from $L^2(\mu)$ into $L^2(\mu).$ Not much literature is available concerning compactness for Singular Integral Operators in the context of Euclidean spaces equipped with a measure different from the Lebesgue measure. We point out that a $T(1)-$like criterion for the compactness of Calder\'on-Zygmund operators in Euclidean spaces is available due to the work of Villarroya \cite{villa}.\\
We denote by $K(L^2(\mu),L^2(\mu))$ the space of compact operators from $L^2(\mu)$ to $L^2(\mu)$. 
We will see that a crucial condition to get a compact Cauchy transform is to require that
\begin{equation}
\Theta^*_\mu(z)=0
\end{equation}
for every $z\in\mathbb{C}.$ 
Our main result is the following.
\begin{theor}\label{main_thm}
Let $\mu$ be a compactly supported positive Radon measure on $\mathbb{C}$ without atoms. The following conditions are equivalent:
\begin{enumerate}[label=(\alph*)]
\item \label{cond_a} $\C_\mu$ is compact from $L^2(\mu)$ to $L^2(\mu)$. 
\item the two following properties hold:\label{cubes}
\begin{enumerate}[label=(\arabic*)]
\item $\Theta^*_{\mu}(z)=0$ uniformly, which means that the limit in \eqref{sup_dens} is $0$ uniformly in $z\in\mathbb{C}$. \label{thm_dens}
\item ${c^2(\mu\llcorner Q)}/{\mu(Q)}\to 0$ as $l(Q)\to 0,$ where $\mu\llcorner Q$ stands for the restriction of $\mu$ to the cube $Q$.\label{thm_curv}
\end{enumerate}
\item the truncated operators $\C_{\epsilon,\mu}$ converge as $\epsilon\to 0$ in the operator norm of the space of bounded linear operators from $L^2(\mu)$ to $L^2(\mu)$.\label{cond_c}
\end{enumerate}
\end{theor}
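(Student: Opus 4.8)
The plan is to prove the cycle \ref{cond_c}$\Rightarrow$\ref{cond_a}$\Rightarrow$\ref{cubes}$\Rightarrow$\ref{cond_c}. The implication \ref{cond_c}$\Rightarrow$\ref{cond_a} is the soft one. For each fixed $\epsilon>0$ the kernel $\chi_{\{|z-w|>\epsilon\}}/(z-w)$ of $\C_{\epsilon,\mu}$ is bounded by $1/\epsilon$, and since $\mu$ has compact support it has finite mass; hence $\C_{\epsilon,\mu}$ is a Hilbert--Schmidt operator, in particular compact. If the truncations converge in operator norm, their limit is therefore compact, and by the Mattila--Verdera theorem quoted above this limit is precisely $\C^w_\mu$, so $\C^w_\mu\in K(L^2(\mu),L^2(\mu))$.

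For \ref{cond_a}$\Rightarrow$\ref{cubes} the engine is the fact that compactness forces the localized operators $\C_{\mu\llcorner Q}=M_{\chi_Q}\C_\mu M_{\chi_Q}$ (with $M_{\chi_Q}$ the multiplication by $\chi_Q$) to have operator norm tending to $0$ uniformly as $l(Q)\to 0$. Indeed, for each fixed $g\in L^2(\mu)$ the finite measure $|g|^2\mu$ has no atoms, so $\sup_{l(Q)\le\rho}\int_Q|g|^2\,d\mu\to 0$ as $\rho\to 0$; that is, $M_{\chi_Q}\to 0$ strongly, uniformly over cubes of side at most $\rho$. Evaluating this on the precompact set $\overline{\C_\mu(B)}$, where $B$ is the unit ball of $L^2(\mu)$, upgrades it to $\sup_{l(Q)\le\rho}\|M_{\chi_Q}\C_\mu\|\to 0$, whence $\sup_{l(Q)\le\rho}\|\C_{\mu\llcorner Q}\|\to 0$. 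Property \ref{thm_dens} then follows because boundedness of a Cauchy transform forces linear growth of the underlying measure with constant comparable to the operator norm: applied to $\mu\llcorner Q$ and a ball of radius $\approx l(Q)$ containing $Q$, this gives $\Theta_\mu(Q)\lesssim\|\C_{\mu\llcorner Q}\|$. Property \ref{thm_curv} follows from the Melnikov--Verdera identity
\[
\int|\C_{\mu\llcorner Q,\epsilon}1|^2\,d\mu=\tfrac{1}{6}c^2_\epsilon(\mu\llcorner Q)+O\!\big(C_0(Q)^2\mu(Q)\big),
\]
in which the left-hand side is at most $\|\C_{\mu\llcorner Q}\|^2\mu(Q)$ and the growth constant $C_0(Q)$ of $\mu\llcorner Q$ is $o(1)$ as $l(Q)\to 0$ by \ref{thm_dens}; letting $\epsilon\to 0$ yields $c^2(\mu\llcorner Q)/\mu(Q)\to 0$.

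The substantial direction is \ref{cubes}$\Rightarrow$\ref{cond_c}. First, \ref{thm_dens} and \ref{thm_curv} are in particular bounded, so Tolsa's curvature characterization gives $L^2(\mu)$-boundedness of $\C_\mu$ (the large-scale contributions being harmless, as $\supp\mu$ is compact and the kernel is bounded off the diagonal). It remains to show that the truncations are Cauchy in operator norm. The goal is a quantitative estimate of the shape
\[
\|\C_{\epsilon,\mu}-\C_{\delta,\mu}\|\le C\Big(\sup_{l(Q)\le C\epsilon}\Theta_\mu(Q)+\sup_{l(Q)\le C\epsilon}\big(c^2(\mu\llcorner Q)/\mu(Q)\big)^{1/2}\Big),\qquad \delta<\epsilon,
\]
whose right-hand side tends to $0$ as $\epsilon\to 0$ by \ref{thm_dens} and \ref{thm_curv}. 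To obtain it I would decompose the kernel of the near-diagonal difference $\C_{\epsilon,\mu}-\C_{\delta,\mu}$, which is supported in $\{\delta<|z-w|\le\epsilon\}$, along the dyadic scales in $(\delta,\epsilon]$ using a lattice adapted to $\mu$, and run a non-homogeneous $T(1)$-type almost-orthogonality (Cotlar--Stein) argument in which each single-scale piece is controlled by local testing constants; these are in turn dominated by the local curvature through Melnikov--Verdera and by the local density. The point is that the difference operator only involves scales smaller than $\epsilon$, so every constant that appears is a small-scale supremum controlled by \ref{cubes}.

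The main obstacle is exactly this last step. The non-homogeneous $T(1)$ theory yields boundedness but is not, as stated, quantitative enough to extract operator-norm smallness of the near-diagonal part from the mere vanishing of the local constants; making the almost-orthogonality between scales quantitative, so that the small per-scale bounds sum to something small rather than merely finite, is where the real difficulty lies. I expect to need a stopping-time/corona decomposition that sorts cubes by their density and curvature ratios, together with a suppression argument for the scales at which $\mu$ is close to a line---there the curvature carries no information and one must rely solely on the uniform vanishing of $\Theta^*_\mu$ provided by \ref{thm_dens}.
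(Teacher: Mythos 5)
Your implications \ref{cond_c}$\Rightarrow$\ref{cond_a} and \ref{cond_a}$\Rightarrow$\ref{cubes} are essentially sound, and the second follows a genuinely different route from the paper's. You exploit compactness functional-analytically: atomlessness gives $M_{\chi_Q}\to 0$ strongly, uniformly over cubes with $l(Q)\leq\rho$, and evaluating on the precompact set $\overline{\C_\mu(B)}$ upgrades this to $\sup_{l(Q)\leq\rho}\|M_{\chi_Q}\C_\mu M_{\chi_Q}\|\to 0$; then \ref{thm_dens} comes from ``boundedness implies linear growth with comparable constant'' and \ref{thm_curv} from the Melnikov--Verdera identity. The paper instead proves the necessity of \ref{thm_dens} by a L\'eger-type two-cube lemma (Lemma \ref{lemm_cubes}), producing separated cubes $Q',Q''$ with $\mu(Q')\approx\mu(Q'')\approx\mu(Q)$ and pairing $\C_\mu$ against the weakly null functions $\chi_{Q'}/\mu(Q')^{1/2}$ to get a lower bound $\gtrsim\Theta$ on the bilinear form, and it proves \ref{thm_curv} from the exact Tolsa--Verdera identity \eqref{form_norm} along the same weakly null family. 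Your version is arguably cleaner, but one point needs care: both facts you invoke (growth $\lesssim$ norm, and the Melnikov--Verdera inequality) are classically stated for the truncations $\C_{\epsilon,\nu}$, whereas compactness controls only the limit operator $M_{\chi_Q}\C^w_\mu M_{\chi_Q}$; bridging the two (for instance by arguing directly with the bilinear form of $\C^w_\mu$ on well-separated sets, as the paper's two-cube lemma does, or by using the exact identity \eqref{form_norm} for the principal-value operator instead of the $\epsilon$-truncated one) should be made explicit.

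The genuine gap is in \ref{cubes}$\Rightarrow$\ref{cond_c}, the only substantial implication, which you do not prove: you write down the correct target estimate and then concede that making the scale-by-scale almost-orthogonality quantitative ``is where the real difficulty lies,'' deferring to a corona/stopping-time construction you never carry out. Your diagnosis is also mistaken: the non-homogeneous $T(1)$ theorem for the Cauchy transform (Tolsa's book) is already fully quantitative, in the form of a two-sided equivalence between the operator norm and the sum of the density supremum and the testing supremum $\sup_{\widetilde Q}\mu(\widetilde Q)^{-1/2}\|\C\chi_{\widetilde Q}\|_{L^2(\mu\llcorner\widetilde Q)}$, with absolute constants; no Cotlar--Stein summation across scales, no corona decomposition, and no separate ``suppression near lines'' are needed (the near-line scenario is excluded by \ref{thm_dens} entering the equivalence directly). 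The paper's argument is short: set $\C^N_\mu=\sum_{j=0}^{N-1}T_j$ with $T_jf(z)=\int_{\Delta_j(z)}\frac{f(w)}{z-w}\,d\mu(w)$, each $T_j$ being Hilbert--Schmidt and hence compact as in Lemma \ref{disc_lemma}; the kernel of the remainder $\C_\mu-\C^{N-1}_\mu$ involves only scales below $2^{-N}l(Q_z)$, so the $T(1)$ equivalence applied to it yields the bound \eqref{T1}, in which both suprema run only over small cubes $\widetilde Q\subseteq Q_N(z)$; the testing term is converted into $c^2(\mu\llcorner\widetilde Q)/\mu(\widetilde Q)$ via \eqref{form_norm}, and both suprema vanish as $N\to\infty$ by \ref{thm_dens} and \ref{thm_curv}. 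This simultaneously gives \ref{cond_a} ($\C_\mu$ is an operator-norm limit of compact operators) and \ref{cond_c}. In short: the key idea you are missing is to subtract the compact shell operators $T_j$ and apply the quantitative $T(1)$ norm-equivalence to the small-scale remainder in one shot; with that, the obstacle you flag disappears, but as written your proposal leaves the heart of the theorem unproven.
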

We remark that the proof of the theorem relies on the $T(1)$-theorem for the Cauchy transform  (see \cite{tolsa_book}) and that one could replace the cubes with balls in condition \ref{cubes}, as well as in \eqref{density_cu}.\\
Theorem \ref{main_thm} can be generalized to higher dimensions taking into consideration the ${n}$-Riesz transform $\mathcal{R}^n_\mu$ on $\mathbb{R}^d$ for $n\leq d$ in place of the Cauchy transform. If $\mu$ is a compactly supported positive Radon measure on $\mathbb{R}^d$ without atoms, $\epsilon>0$, $f\in L^1_{loc}(\mu)$ and $z\in\mathbb{R}^d,$ the truncated Riesz transform is defined as
\begin{equation}
\mathcal{R}^{n}_{\epsilon,\mu}f(z):=\int_{|z-w|>\epsilon}\frac{x-y}{|x-y|^{n+1}}f(y)d\mu(y).
\end{equation}
As in the case of the Cauchy transform, thanks again to the result in \cite{mattila_verdera_jems}, the weak limit $\mathcal{R}^{n,w}_\mu$ of $\mathcal{R}^n_{\epsilon,\mu}$ as $\epsilon\to 0$ exists provided the $\mathcal{R}^n_{\epsilon,\mu}$ are uniformly bounded on $L^2(\mu),$ and we can understand the compactness of the Riesz transform as in Definition \ref{def_compact_cauchy}. The main difference with the Cauchy transform is that the only case in which boundedness is known to imply that the principal value exists is for $n=d-1$. This is a consequence of \cite{NTV}.\\	
In this more general context, Theorem \ref{main_thm} reads as follows.
\begin{theor}\label{den_thm_gen}
Let $\mu$ be a compactly supported positive Radon measure on $\mathbb{R}^d$ without atoms.
The following conditions are equivalent:
\begin{enumerate}[label=(\alph*)]
\item $\mathcal{R}^n_\mu$ is compact from $L^2(\mu)$ to $L^2(\mu)$.
\item the two following properties hold:\label{cond_b_gen}
\begin{enumerate}[label=(\arabic*)]
\item\label{dens_cond_gen} $\Theta^{n-1,*}_{\mu}(z)=0$ uniformly in $z\in\mathbb{R}^d.$
\item ${||\mathcal{R}^n_\mu \chi_Q||^2_{L^2(\mu\llcorner Q)}}/{\mu(Q)}\to 0$ as $l(Q)\to 0.$ 
\end{enumerate}
\item the truncated operators $\mathcal{R}^n_{\epsilon,\mu}$ converge as $\epsilon\to 0$ in the operator norm of the space of bounded linear operators from $L^2(\mu)$ to $L^2(\mu)$.
\end{enumerate}
\end{theor}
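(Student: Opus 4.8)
The plan is to follow the architecture of the proof of Theorem~\ref{main_thm}, transporting each step from the Cauchy kernel to the $n$-Riesz kernel $\tfrac{x-y}{|x-y|^{n+1}}$. The only structural change is that the Menger curvature --- which for $n=1$ governs the $L^2$ behaviour of the operator through the Melnikov--Verdera identity --- has no counterpart for $n>1$; its role is played verbatim by the testing quantity $\|\mathcal{R}^n_\mu\chi_Q\|_{L^2(\mu\llcorner Q)}^2/\mu(Q)$ in the second clause of \ref{cond_b_gen}. The engine is the $T(1)$-theorem for the $n$-Riesz transform with respect to a non-doubling measure \cite{tolsa_book}, together with the fact from \cite{mattila_verdera_jems} that, once the truncations $\mathcal{R}^n_{\epsilon,\mu}$ are uniformly bounded, they converge to $\mathcal{R}^{n,w}_\mu$ in the weak operator topology. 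I would prove the cycle $(c)\Rightarrow(a)\Rightarrow\ref{cond_b_gen}\Rightarrow(c)$.

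The implication $(c)\Rightarrow(a)$ is the soft one. For each fixed $\epsilon>0$ the truncated kernel is bounded by $\epsilon^{-n}$ and $\mu$ is finite with compact support, so $\mathcal{R}^n_{\epsilon,\mu}$ is Hilbert--Schmidt, hence compact. If the $\mathcal{R}^n_{\epsilon,\mu}$ converge in operator norm, their limit is a norm-limit of compact operators and is therefore compact; since norm convergence implies weak-operator convergence, that limit must coincide with $\mathcal{R}^{n,w}_\mu$. Uniform boundedness is automatic from norm convergence, so $\mathcal{R}^n_\mu$ is compact in the sense of the analogue of Definition~\ref{def_compact_cauchy}.

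For $(a)\Rightarrow\ref{cond_b_gen}$ I would exploit that a compact operator sends weakly null sequences to norm-null ones. Given cubes $Q_k$ with $l(Q_k)\to0$, the normalised indicators $f_k=\chi_{Q_k}/\mu(Q_k)^{1/2}$ have unit norm and, because $\mu$ is atomless and compactly supported, every subsequence has a further subsequence converging weakly to $0$ (the cubes subconverge to a point of zero $\mu$-mass). Compactness then forces $\|\mathcal{R}^n_\mu f_k\|_{L^2(\mu)}\to0$, and since $\|\mathcal{R}^n_\mu\chi_{Q}\|_{L^2(\mu\llcorner Q)}\le\|\mathcal{R}^n_\mu\chi_Q\|_{L^2(\mu)}$ this yields the second clause of \ref{cond_b_gen}. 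To extract the density condition \ref{dens_cond_gen} I would argue by contradiction: if the relevant density stayed bounded below along cubes $Q_k\to x_0$, I would test the operator on the interaction between $Q_k$ and a comparable adjacent cube, where the Riesz kernel is essentially a fixed unit vector, producing a lower bound of the order of the density for the corresponding normalised image; this contradicts the norm-null conclusion just obtained. The delicate point here is mass concentration --- an adjacent cube of comparable mass need not exist a priori --- which I expect to handle by a pigeonhole/stopping argument over nearby cubes, iterating into the region of concentration and using the absence of atoms.

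The heart of the matter is $\ref{cond_b_gen}\Rightarrow(c)$. Writing $S_{\epsilon',\epsilon}=\mathcal{R}^n_{\epsilon',\mu}-\mathcal{R}^n_{\epsilon,\mu}$ for $\epsilon'<\epsilon$, its kernel is supported in the near-diagonal shell $\{\epsilon'<|x-y|\le\epsilon\}$, so it connects only points at distance $\le\epsilon$. I would cover $\supp\mu$ by cubes of side comparable to $\epsilon$ with bounded overlap; the finite range of $S_{\epsilon',\epsilon}$ makes the corresponding pieces almost orthogonal, so by a standard Cotlar--Stein argument $\|S_{\epsilon',\epsilon}\|$ is controlled by the supremum of the local operator norms over these cubes. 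On each such cube I would invoke a \emph{quantitative, scale-localised} version of the $T(1)$-theorem: the local norm is bounded by the local $n$-growth constant of $\mu$ plus the local (same-cube) testing constant, both of which tend to $0$ as $\epsilon\to0$ --- the former by the uniform vanishing-density hypothesis \ref{dens_cond_gen}, the latter by the second clause of \ref{cond_b_gen}. This shows $\|S_{\epsilon',\epsilon}\|\to0$, i.e. $\{\mathcal{R}^n_{\epsilon,\mu}\}$ is Cauchy in operator norm; the same estimate, comparing each truncation with one at a fixed small scale, simultaneously yields the uniform $L^2$-boundedness needed to invoke \cite{mattila_verdera_jems}. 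The main obstacle is precisely this quantitative $T(1)$-statement: the textbook theorem is qualitative (boundedness is equivalent to finiteness of the growth and testing constants), whereas here I need the operator norm to be \emph{small} when those constants are small. Extracting that quantitative dependence --- tracking constants through the Nazarov--Treil--Volberg stopping-time construction or the suppressed-kernel argument underlying \cite{tolsa_book}, and verifying that a vanishing growth constant also drives the Calder\'on--Zygmund error terms to zero --- is the step that requires genuine work; for $n=d-1$ one may additionally appeal to \cite{NTV} to pass between the weak limit and the principal value.
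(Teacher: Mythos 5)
Your proposal is correct and, at bottom, follows the same architecture as the paper's proof: the paper proves Theorem \ref{main_thm} in detail and then disposes of Theorem \ref{den_thm_gen} with the remark that the same argument goes through with minor changes, the Tolsa--Verdera identity \eqref{form_norm} being replaced, exactly as you say, by the testing quantity itself. Two points of comparison are worth recording. First, the step you single out as the ``main obstacle'' --- a quantitative, scale-localised $T(1)$ estimate --- is not an obstacle but the very tool the paper invokes off the shelf: the non-homogeneous $T(1)$ theorem of \cite[Chapter 3]{tolsa_book} is already quantitative (the operator norm is comparable to the growth constant plus the testing constants), and the paper applies it directly to the tail $\C_\mu-\C_\mu^{N-1}$ of a decomposition into compact Hilbert--Schmidt dyadic-annulus truncations $T_j$, obtaining \eqref{T1}, in which the suprema are automatically localised to cubes inside $Q_N(z)$ because the tail kernel only sees scales below $2^{-N}l(Q_z)$; no tracking of constants through the stopping-time construction is needed, and your covering-plus-Cotlar--Stein reduction, while workable, is superfluous. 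It is also \eqref{T1}, used in both directions, that yields the equivalence with condition (c), as in your sketch. Second, your pigeonhole construction of a nearby cube of comparable mass in the necessity of the density condition is precisely the paper's Lemma \ref{lemm_cubes} (a variant of Lemma 2.3 of \cite{leger}), which splits $Q$ into a grid and plays the growth condition \eqref{growth_condition} against the assumed lower density bound; your instinct that this is the delicate point is right, and the paper's lemma is the clean way to do it. One caveat on exponents: for $\mathcal{R}^n_\mu$ the quantitative $T(1)$ estimate involves the $n$-dimensional density $\Theta^n_\mu$, consistent with your ``local $n$-growth constant,'' whereas the statement's condition \ref{dens_cond_gen} writes $\Theta^{n-1,*}_\mu$ --- apparently a typo, since for $n=1$ it would trivialise rather than reduce to the $\Theta^1$ condition of Theorem \ref{main_thm}; your reading is the consistent one. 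Your implication (c)$\Rightarrow$(a) and the weakly-null normalised indicators in (a)$\Rightarrow$(b) match the paper essentially verbatim.
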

Theorem \ref{den_thm_gen} can be proved with minor changes of the proof that we will discuss for the case of the Cauchy transform. Combining condition \ref{cond_b_gen} in Theorem \ref{den_thm_gen} with \cite[Theorem 1.6]{mattila_verdera_jems}, we can infer that if $\mathcal{R}^n_\mu$ is compact then the principal value $\mathcal{R}^n_{\mu}(x)$ exists for $\mu-$almost every $x$.\\
The work is structured as follows.
In Section \ref{int_sec} we deal with two toy models: first we show a direct proof of the non-compactness of the Cauchy transform of the one dimensional Lebesgue measure on a segment. Then, we prove that the Cauchy transform of a disc endowed with the planar Lebesgue measure is compact. In Section \ref{proof_sec} we prove Theorem \ref{main_thm}. As an application of this result, Section \ref{cantor_sec} is devoted to the discussion of the case of the general planar Cantor sets. We conclude the exposition with a remark on the generalization of the main theorem to other Singular Integral Operators.

\subsection*{Notation.} Throughout this work, we use the standard notations $A\lesssim B$ if  there exits an absolute positive constant $C$ such that $A\leq C B$. If $A\lesssim B$ and $B\lesssim A,$ we write $A\approx B.$\\
Given an operator $T:X\to Y,$ we use the notation $||T||_{X\to Y}$ for its operator norm.

\section{The Cauchy transform on a segment and on the disc}\label{int_sec}
It may be worth recalling the following property of compact operators: if $X$ and $Y$ are Banach spaces, $T:X\to Y$ is a compact operator and $\{u_k\}_k$ is a sequence in $X$ such that $u_k\rightharpoonup u$ for some $u\in X$ (weak convergence), then $Tu_k\to Tu$ (strongly) in $Y.$ We will use this property both for the proof of the following proposition and for the proof of the main theorem.
Let us start by considering the Cauchy transform on a segment. Given an interval $I$ on the real line, we denote by $\mathcal{H}^1$ the $1-$dimensional Hausdorff measure and use the notation $L^2(I):=L^2(\mathcal{H}^1\llcorner(I\times \{0\})).$ Without loss of generality, we analyze the case $I=[0,1]$.
\begin{prop}
Let $\mu:=\mathcal{H}^1\llcorner([0,1]\times \{0\}).$ The Cauchy transform $\mathcal{C}_\mu$ is not a compact operator from $L^2(\mu)$ into $L^2(\mu).$
\end{prop}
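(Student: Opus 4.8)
The plan is to exploit the characterization of compactness recalled just before the proposition: if $\mathcal{C}_\mu$ were compact, then every weakly convergent sequence $u_k \rightharpoonup u$ in $L^2(\mu)$ would satisfy $\mathcal{C}_\mu u_k \to \mathcal{C}_\mu u$ strongly. So I would look for a sequence that converges weakly to $0$ but whose images under $\mathcal{C}_\mu$ do \emph{not} converge to $0$ in norm; this contradiction rules out compactness. The natural candidates are suitably normalized indicator functions concentrated on shrinking subintervals, which mimic the scaling-invariance of the Cauchy kernel.

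Let me think about the structure. The measure $\mu = \mathcal{H}^1\llcorner([0,1]\times\{0\})$ is just arc length on the unit segment, and on this segment $\mathcal{C}_\mu$ essentially reduces to (a constant multiple of) the Hilbert transform on $[0,1]$, which is bounded on $L^2$. To produce a weakly-null sequence I would take a partition of $[0,1]$ into $k$ intervals $I_j^{(k)} = [(j-1)/k, j/k]$ and set, for instance, $u_k := \sqrt{k}\,\chi_{I_1^{(k)}}$, so that $\|u_k\|_{L^2(\mu)} = 1$ for every $k$. These functions have $L^2$ norm one but concentrate near the origin, so for any fixed $g \in L^2(\mu)$ one checks $\langle u_k, g\rangle = \sqrt{k}\int_0^{1/k} g\,d\mu \to 0$ by Cauchy-Schwarz (the factor $\sqrt{k}$ is beaten by $\|g\chi_{I_1^{(k)}}\|_{L^2}\to 0$), giving $u_k \rightharpoonup 0$.

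\textbf{The key step} is then to show $\|\mathcal{C}_\mu u_k\|_{L^2(\mu)}$ stays bounded away from $0$. Here I would use the scaling structure of the Cauchy kernel: the map $z \mapsto z/k$ conjugates the Cauchy transform on $[0,1]$ restricted near $[0,1/k]$ to the Cauchy transform on $[0,1]$ itself, up to the correct balance of the $\sqrt{k}$ normalizations, so that $\|\mathcal{C}_\mu u_k\|_{L^2(\mu)}$ is \emph{comparable to a fixed positive constant} independent of $k$ (one explicitly computes that $\mathcal{C}_\mu(\sqrt{k}\,\chi_{[0,1/k]})$, restricted to $[0,1/k]$ itself, has $L^2$ norm equal to that of $\mathcal{C}_\mu(\chi_{[0,1]})$ on $[0,1]$, by the change of variables $w = kt$). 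Since this quantity does not tend to $0$ while $u_k \rightharpoonup 0$, the compactness criterion is violated.

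\textbf{The main obstacle} I anticipate is making the scaling argument fully rigorous: one must verify that the contribution to $\|\mathcal{C}_\mu u_k\|_{L^2(\mu)}$ coming from points \emph{outside} the shrinking interval $I_1^{(k)}$ does not somehow cancel the inside contribution, and that the principal-value definition causes no difficulty. Concretely, I would lower-bound the full norm by the contribution over $I_1^{(k)}$ alone (using positivity or an explicit sign computation of $\operatorname{Im}\mathcal{C}_\mu u_k$), and observe that this restricted piece is scale-invariant and strictly positive. An alternative, perhaps cleaner, route would be to compute $\mathcal{C}_\mu u_k$ explicitly via $\int_0^{1/k}\frac{dt}{z-t} = \Log\!\bigl(z/(z-1/k)\bigr)$ and estimate the resulting logarithmic expression directly; either way the decisive point is the scale invariance of the Cauchy kernel on the line, which prevents the images from decaying.
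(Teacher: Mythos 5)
Your proposal is correct, but it takes a genuinely different route from the paper. The paper tests $\mathcal{C}_\mu$ on mean-zero Haar-type bumps $f_k=2^{(k-1)/2}\bigl(\chi_{[1/2-2^{-k},1/2]}-\chi_{[1/2,1/2+2^{-k}]}\bigr)$ concentrated at the midpoint, invokes the exact isometry $\|Hf\|_{L^2(\mathbb{R})}=\pi\|f\|_{L^2(\mathbb{R})}$ of the Hilbert transform on the whole line, and then checks that the tails $\|Hf_k\|_{L^2((-\infty,0])}$ and $\|Hf_k\|_{L^2([1,+\infty))}$ vanish as $k\to\infty$, so that $\|\mathcal{C}_\mu f_k\|_{L^2([0,1])}\to\pi$ while $f_k\rightharpoonup 0$. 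You instead test on normalized indicators $u_k=\sqrt{k}\,\chi_{[0,1/k]}$ and use dilation invariance of the kernel: the substitution $x=y/k$, $t=s/k$ gives exactly $\int_0^{1/k}|\mathcal{C}_\mu u_k|^2\,d\mu=\int_0^1\bigl|H\chi_{[0,1]}(y)\bigr|^2\,dy=\int_0^1\log^2\bigl(\tfrac{y}{1-y}\bigr)\,dy=\tfrac{\pi^2}{3}>0$, and since $\|\mathcal{C}_\mu u_k\|_{L^2(\mu)}\ge\|\mathcal{C}_\mu u_k\|_{L^2([0,1/k])}$, compactness fails against $u_k\rightharpoonup 0$. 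Your version is more self-contained (no appeal to the $L^2(\mathbb{R})$ isometry from \cite{Stein}), extends verbatim to any kernel homogeneous of degree $-1$, and is in fact the same philosophy the paper deploys later in Section \ref{proof_sec}, where the functions $\varphi_Q=\chi_Q/\mu(Q)^{1/2}$ are tested on small cubes — your $u_k$ is precisely $\varphi_{[0,1/k]}$; what the paper's choice buys is the sharper quantitative conclusion that the norms converge to the exact constant $\pi$, with fast tail decay coming from the cancellation of the mean-zero bump. One correction: the ``main obstacle'' you anticipate is vacuous — the $L^2(\mu)$ norm over $[0,1]$ trivially dominates the norm over the subset $[0,1/k]$, so no positivity or sign analysis is needed (and $\Im\mathcal{C}_\mu u_k$ vanishes identically on the segment anyway, the kernel being real there); the restriction bound closes the argument as it stands, modulo the routine remark, already made in the paper's introduction, that compactness may be tested on the principal-value operator.
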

\begin{proof}
Let $\C_\mu$ be the Cauchy transform of the measure $\mu:=\mathcal{H}^1\llcorner([0,1]\times \{0\}),$ acting on functions belonging to $L^2([0,1]).$\\
For $k\in\mathbb{N}$, let us define the function $f_k:\mathbb{R}\to\mathbb{R}$ as
\begin{equation}
f_k(x):= 2^{(k-1)/2}\big(\chi_{[1/2-2^{-k},1/2]}(x)-\chi_{[1/2,1/2+2^{-k}]}(x)\big).
\end{equation}
Notice that $||f_k||_{L^2([0,1])}=||f_k||_{L^2(\mathbb{R})}=1$ and that $\{f_k\}_k$ converges to $0$ in the weak topology of $L^2([0,1])$. However, $\{f_k\}_k$ does not converge in the strong topology of $L^2([0,1])$.\\
Let us denote by $Hf_k$ the Hilbert transform of $f_k$
\begin{equation}
Hf_k(x):=\pv\int\frac{f_k(y)}{x-y}dy
\end{equation}
for $x\in\mathbb{R}.$ We claim that $Hf_k$ does not converge to $0$ in the strong topology of $L^2([0,1])$. Hence $\C_\mu=H$ is not compact in $L^2(\mu)$.\\
A well known fact regarding Hilbert transform (see e.g. \cite{Stein}) is that
\begin{equation}\label{hilb_iso}
||Hf||_{L^2(\mathbb{R})}=\pi||f||_{L^2(\mathbb{R})}
\end{equation}
for every $f\in L^2(\mathbb{R}).$\\
The following argument proves that $||\C_\mu f_k||_{L^2([0,1])}=||H f_k||_{L^2([0,1])}$ tends to $\pi$ for $k\to\infty$.\\
It is clearly enough to show that
\begin{equation}\label{dec_1_infty}
||H f_k||^2_{L^2([1,+\infty))}\to 0 \qquad\text{ for }\qquad k\to\infty
\end{equation}
and
\begin{equation}\label{dec_infty_0}
||H f_k||^2_{L^2((-\infty,0])}\to 0 \qquad\text{ for }\qquad k\to\infty.
\end{equation}
To prove \eqref{dec_1_infty}, first notice that for $y\in\supp f_k$ and $x\geq 1$, it holds that $|x-y|\geq |x-3/4|.$ Then
\begin{equation}
\begin{split}
||H f_k||^2_{L^2(1,+\infty)}&=\int^{+\infty}_1\Big|\int^{1/2+2^{-k}}_{1/2-2^{-k}}\frac{f_k(y)}{x-y}dy\Big|^2dx \\
&\leq \int^{+\infty}_1\frac{1}{|x-\frac{3}{4}|^2}\Big(\int^{1/2+2^{-k}}_{1/2-2^{-k}}|f_k(y)|dy\Big)^2dx\\
&\leq 2^{-k+1}\int^{+\infty}_1\frac{1}{|x-\frac{3}{4}|^2}dx\lesssim 2^{-k},
\end{split}
\end{equation}
which gives \eqref{dec_1_infty}. The proof of \eqref{dec_infty_0} is analogous.
 \end{proof}
Now we turn to analyze the Cauchy transform on the disc.
Let $D:=D(0,1)=\{z\in\mathbb{C}:|z|< 1\}$ and let $\epsilon>0.$ Let $\mu=dA$ be the $2-$dimensional Lebesgue measure restricted to $D$.
\begin{lemm}\label{disc_lemma}
The operator $\C_{\epsilon,\mu}:L^2(dA)\to L^2(dA)$ is compact for every $\epsilon>0.$
\end{lemm}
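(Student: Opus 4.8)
The plan is to show that the truncated Cauchy kernel on the disc gives a Hilbert–Schmidt operator, which is in particular compact. The key observation is that for fixed $\epsilon>0$, the truncation removes the singularity of the kernel: the operator $\C_{\epsilon,\mu}$ has integral kernel
\begin{equation}
K_\epsilon(z,w):=\frac{1}{z-w}\,\chi_{\{|z-w|>\epsilon\}}(z,w),
\end{equation}
and this kernel is bounded by $1/\epsilon$ on the compact set $\overline{D}\times\overline{D}$. First I would compute the Hilbert–Schmidt norm
\begin{equation}
\iint_{D\times D}|K_\epsilon(z,w)|^2\,dA(w)\,dA(z)=\iint_{\substack{z,w\in D\\|z-w|>\epsilon}}\frac{1}{|z-w|^2}\,dA(w)\,dA(z),
\end{equation}
and check that it is finite. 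Since the domain of integration excludes the diagonal neighborhood $\{|z-w|\le\epsilon\}$, the integrand $1/|z-w|^2$ stays bounded above by $1/\epsilon^2$, and $D\times D$ has finite planar measure $\pi^2$, so the double integral is at most $\pi^2/\epsilon^2<\infty$.

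Once finiteness of the Hilbert–Schmidt norm is established, I would invoke the standard fact that any integral operator on an $L^2$ space whose kernel lies in $L^2(\mu\times\mu)$ is a Hilbert–Schmidt operator, and that every Hilbert–Schmidt operator on a Hilbert space is compact. Concretely, the finiteness of $\iint|K_\epsilon|^2\,dA\,dA$ means $K_\epsilon\in L^2(dA\times dA)$, which yields both boundedness of $\C_{\epsilon,\mu}$ on $L^2(dA)$ and its compactness via the Hilbert–Schmidt membership. This completely avoids any principal-value subtleties precisely because $\epsilon$ is fixed and strictly positive.

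The only point requiring a little care is the rigorous passage from "the kernel is square-integrable" to "the operator is compact." The cleanest route is to approximate $K_\epsilon$ in $L^2(dA\times dA)$ by finite-rank kernels, for instance by simple functions or by a truncated expansion in an orthonormal basis of $L^2(dA)$; the associated finite-rank operators then converge to $\C_{\epsilon,\mu}$ in operator norm because the operator norm is dominated by the $L^2(dA\times dA)$ norm of the kernel difference. Since the uniform limit of finite-rank operators is compact, this establishes the claim. I do not expect any genuine obstacle here: the essential content is simply that truncation at scale $\epsilon$ turns a singular kernel into a bounded one on a set of finite measure, so the main step is the elementary bound above, and the rest is the classical Hilbert–Schmidt theory.
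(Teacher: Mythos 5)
Your proposal is correct and follows essentially the same route as the paper: both show that the truncated kernel $K_\epsilon$ is bounded by $1/\epsilon$ on the finite-measure set $D\times D$, hence square-integrable, and then invoke the Hilbert--Schmidt theorem to conclude compactness. Your version is in fact slightly more careful than the paper's, since you integrate the kernel in both variables and sketch the finite-rank approximation behind the Hilbert--Schmidt compactness, whereas the paper cites it directly from Brezis.
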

\begin{proof}
Let $z,w\in\mathbb{C}$ and let $K_{\epsilon}(z,w):={\chi_{D(z,\epsilon)^c}(w)}/({z-w}).$
By the Hilbert-Schmidt's Theorem (see \cite[Theorem 6.12]{brezis}), to prove the lemma it is enough to show that the integral
\begin{equation}
\int_{D} |K_{\epsilon}(z,w)|^2 dA(z)
\end{equation}
converges.
This occurs in our case because
\begin{equation}
\int_{D} |K_{\epsilon}(z,w)|^2dA(z)\leq\frac{A(D)}{\epsilon^2}=\frac{\pi}{\epsilon^2},
\end{equation}
and so the proof is complete.
\end{proof}
For $f\in L^2(dA)$ let us define
\begin{equation}
\C^\epsilon_\mu f(z):=\C_\mu f(z)-\C_{\epsilon,\mu}f(z).
\end{equation}
By Lemma \ref{disc_lemma}, to prove that $\C_\mu$ belongs to $K(L^2(dA),L^2(dA))$ it suffices to prove that $||\C^\epsilon_\mu||_{L^2(dA)\to L^2(dA)}\to 0$ as $\epsilon\to 0.$ Indeed, this implies that  $\{\C_{\epsilon,\mu}\}_{\epsilon>0}$ converges in operator norm to the Cauchy transform, which proves that it is compact.\\
For $f\in L^2(dA)$, a direct computation using polar coordinates gives
\begin{align}
\int_D|\C^\epsilon_\mu f(z)|^2dA(z)&=\int_D\Big|\int_{|z-w|<\epsilon}\frac{f(w)}{z-w}dA(w)\Big|^2dA(z)\\
&=\int_D\Big|\int^{2\pi}_0\int^\epsilon_0{e^{-i\theta}}f(z+re^{i\theta})\chi_D(z+re^{i\theta})drd\theta\Big|^2dA(z)\\
&\leq \int_D\int^{2\pi}_0\int^\epsilon_0|f(z+re^{i\theta})\chi_D(z+re^{i\theta})|^2drd\theta dA(z)\\
&\leq 2\pi \epsilon ||f||^2_{L^2(dA)},
\end{align}
where in the last inequality we used Fubini's Theorem. Hence
\begin{equation}
||\C^\epsilon_\mu f||_{L^2(dA)\to L^2(dA)}\leq (2\pi\epsilon)^{1/2}
\end{equation}
and so $\C_{\mu}\in K(L^2(dA),L^2(dA)).$
\begin{rem}
The integral
\begin{equation}\label{obs}
\int_{B(0,1)}\frac{1}{|z|}dA(z)
\end{equation}
plays a crucial role in the proof of the compactness of the Cauchy transform of a disc.
When focusing on the general case in which $dA$ is replaced by a measure $\mu,$ one may be tempted to guess that we need a density condition which gives that the analogue of \eqref{obs} converges. This drives our attention to measures with zero linear density, which we will prove to be a necessary condition for the Cauchy transform to be compact.
\end{rem}
\section{The proof of Theorem \ref{main_thm}} \label{proof_sec}
\subsection{Necessary conditions for the compactness.}
In order to prove the necessity of the conditions in Theorem \ref{main_thm}, we argue by contradiction: assuming that
there exists a sequence of cubes $\{Q_j\}_j$ such that $l(Q_j)\to 0$ but $\limsup\Theta^1_\mu(Q_j)>0,$ we will prove that the Cacuhy transform does not define a compact operator on $L^2(\mu).$ \\
We recall that a necessary condition to have the $L^2(\mu)$-boundedness of $\mathcal{C}_{\mu}$ is that $\mu$ has linear growth (see \cite{david_wavelets}). In particular we choose to denote by $C_0$ a positive constant such that
\begin{equation}\label{growth_condition}
\mu(Q)\leq C_0 l(Q)
\end{equation}
for every cube in $\mathbb{R}^2.$\\
Let us state a technical lemma that we are going to use to prove Theorem \ref{main_thm}. The proof is a variant of Lemma 2.3 in \cite{leger}.
\begin{lemm} \label{lemm_cubes}
Suppose that there is a sequence of cubes $\{Q_j\}$ such that $l(Q_j)\to 0$ and
\begin{equation}
\limsup_j \Theta^1_\mu(Q_j)\equiv \Theta >0.
\end{equation}
Let $Q$ be a cube in $\{Q_j\}_j$ such that $\Theta^1_\mu(Q)\geq \Theta/2$. There exist $C_1,C'_1\in \mathbb{N}$ both bigger than $1$
 such that we can find two cubes $Q'$ and $Q''$ with side length $l(Q)/C_1$ and with the following properties
\begin{enumerate}
\item $\dist(Q',Q'')\approx l(Q')$.
\item $\min (\mu(Q'),\mu(Q''))\geq {l(Q)}/{C'_1}$ . \label{cond_2}
\end{enumerate}
\end{lemm}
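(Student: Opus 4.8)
The plan is to play the linear growth bound \eqref{growth_condition}, $\mu(R)\le C_0\, l(R)$, against the lower bound $\Theta^1_\mu(Q)\ge \Theta/2$ supplied by the hypothesis, i.e. $\mu(Q)\ge\tfrac{\Theta}{2}\,l(Q)$. Morally, a measure of linear growth cannot pack an amount of mass comparable to $l(Q)$ in a way that resists being split into two well-separated lumps, and producing these two lumps is exactly the content of the lemma. Following the scheme of Lemma~2.3 in \cite{leger}, I would first pass to a subcube of $Q$ on which the density is essentially extremal, and then split that subcube.

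Concretely, set $D^{*}:=\sup\{\mu(R)/l(R): R\subseteq Q \text{ a cube}\}$, so that $\Theta/2\le D^{*}\le C_0$ by \eqref{growth_condition} and the hypothesis, and fix a cube $R_0\subseteq Q$ with $\mu(R_0)\ge\tfrac34 D^{*}\,l(R_0)$; write $\ell_0:=l(R_0)$ and $M_0:=\mu(R_0)\gtrsim\ell_0$. The virtue of this near-extremal choice is a de-concentration estimate: every subcube $R\subseteq R_0$ of side $s$ satisfies $\mu(R)\le D^{*}s$, so no small sub-block can absorb almost all of $M_0$. Partitioning $R_0$ into a fixed grid of cells of side $\approx\ell_0$ (say the nine cells of side $\ell_0/3$), this forces at least two cells to be \emph{heavy}, each of mass $\gtrsim\ell_0$: if a single cell carried all but a small fraction of $M_0$, its density would exceed $D^{*}$, contradicting the definition of $D^{*}$. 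At the same time, linear growth forces $\supp(\mu\llcorner R_0)$ to have diameter $\gtrsim\ell_0$, since otherwise the mass $M_0\gtrsim\ell_0$ would sit inside a cube of side $o(\ell_0)$, again violating \eqref{growth_condition}.

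The cubes $Q',Q''$ are then taken among these heavy cells (or suitable subcubes of them), of common side $\approx\ell_0$; one adjusts the side to $l(Q)/C_1$ for an integer $C_1$ by working with an equipartition of $Q$, and reads off $C_1'$ from $\min(\mu(Q'),\mu(Q''))\gtrsim\ell_0\gtrsim l(Q)/C_1'$. The main obstacle is precisely ensuring that the two heavy cells can be chosen \emph{separated}, i.e. with $\dist(Q',Q'')\approx l(Q')$ rather than merely adjacent: the de-concentration estimate excludes clustering into a sufficiently small block and the diameter bound spreads out the support, but converting this into two cells that are simultaneously separated by $\approx$ their side length and each of mass $\approx$ their side length is the delicate point. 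The genuinely borderline situation is when the mass of $R_0$ concentrates towards a single point, in which case one must pass to the scale at which the mass finally de-clusters; it is exactly here that the hypothesis that $\mu$ has no atoms enters, guaranteeing that the mass spreads into two separated cubes at an appropriate subscale. This extraction is the technical heart of the argument and is carried out as in \cite[Lemma~2.3]{leger}.
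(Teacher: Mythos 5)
Your de-concentration step is correct as far as it goes (with $M_0\ge\tfrac34 D^*\ell_0$ and $\mu(\mathrm{cell})\le D^*\ell_0/3\le\tfrac49M_0$, a nine-cell grid of $R_0$ does yield two cells of mass $\gtrsim\ell_0$), but the proof has a genuine gap exactly where you flag the ``delicate point'': the separation $\dist(Q',Q'')\approx l(Q')$ is never established, and the mechanism you propose for it is the wrong one. Non-atomicity plays no role here — the paper's proof never invokes it, and it cannot do the job, since the scale at which an atomless measure de-clusters depends on the cube and the point, whereas the lemma demands constants $C_1,C_1'$ uniform over the whole sequence $\{Q_j\}$. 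What actually forbids clustering is the linear growth bound \eqref{growth_condition} applied to a \emph{fine} grid at scale $l(Q)$: the paper partitions $Q$ itself into $C_1^2$ cells of side $l(Q)/C_1$ and assumes, for contradiction, that every pair of heavy cells (mass $\ge l(Q)/C_1'$) touches. Then all heavy cells lie in a single block $P$ of side $3l(Q)/C_1$, so by \eqref{growth_condition} they carry at most $3C_0l(Q)/C_1$, while the light cells carry at most $C_1^2l(Q)/C_1'$ in total; summing contradicts $\mu(Q)\ge(\Theta/2)l(Q)$ once $C_1$ and then $C_1'$ are chosen large (e.g. $3C_0/C_1<\Theta/4$ and $C_1^2/C_1'<\Theta/4$). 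Your fixed coarse grid cannot run this argument: with nine cells the ``clustering block'' is comparable to $R_0$ itself, and the growth bound then gives $\mu(P)\le C_0\ell_0$, which is not small relative to $M_0\approx\ell_0$ — so two heavy cells sharing an edge (mass split across a common boundary) is consistent with everything you prove.

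There is a second, independent flaw: the extremal-cube detour destroys the quantitative conclusion. Nothing bounds $\ell_0=l(R_0)$ from below in terms of $l(Q)$; the supremum defining $D^*$ may be nearly attained only at tiny scales (e.g. if $\mu\llcorner Q$ has a small piece of density close to $C_0$ supported in a cube of side $\delta\ll l(Q)$, then $R_0$ has side $\approx\delta$). Consequently the inequality $\ell_0\gtrsim l(Q)/C_1'$ that you ``read off'' at the end is unjustified: your heavy cells have side $\ell_0/3$, which need not equal $l(Q)/C_1$ for any uniform integer $C_1$, and mass $\gtrsim\ell_0$, which may be far smaller than the required lower bound $l(Q)/C_1'$. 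The repair is to drop $R_0$ altogether and argue by contradiction on $Q$ directly at scale $l(Q)/C_1$, as sketched above; there the global bound \eqref{growth_condition}, with its constant $C_0$ independent of $Q$, replaces your near-extremal density $D^*$ and keeps all constants uniform.
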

\begin{proof}
Let us argue by contradiction. Let us split $Q$ into a grid of $C_1^2$ equal cubes of side length $l(Q)/C_1$ whose sides are parallel to the sides of $Q$; we denote this collection of cubes as $\mathcal{D}.$  Let us assume that each couple of cubes $Q',Q''\in\mathcal{D}$, is such that either they touch (so that $\dist(Q',Q'')=0$) or $\min(\mu(Q'),\mu(Q''))\leq l(Q)/C'_1$.\\
By construction we have that
\begin{equation}\label{eq1}
\sum_{\widetilde{Q}\in\mathcal{D}}\mu(\widetilde{Q})=\mu(Q)=\Theta(Q)l(Q).
\end{equation}
Now let us consider the family
\begin{equation}
\mathcal{G}:=\Big\{\widetilde{Q}\in\mathcal{D}:\mu(\widetilde{Q})\geq \frac{l(Q)}{C'_1}\Big\}.
\end{equation}
By hypothesis, all the cubes in $\mathcal{G}$ must be contained in a single cube of side length $3l(Q)/C_1$ that we denote as $P.$ The growth condition \eqref{growth_condition} gives
\begin{equation}
\mu(P)\leq C_0 l({P})=3 {C_0}l(Q)/C_1,
\end{equation}
so that
\begin{equation}\label{eq2}
\sum_{\widetilde{Q}\in\mathcal{G}}\mu(\widetilde{Q})\leq 3 \frac{C_0}{C_1}l(Q).
\end{equation}
For those cubes of $\mathcal{D}$ not belonging to $\mathcal{G}$ we can write
\begin{equation}\label{eq3}
\sum_{\widetilde{Q}\in\mathcal{D}\setminus\mathcal{G}}\mu(\widetilde{Q})\leq \frac{C_1^2}{C_1'}l(Q).
\end{equation}
By hypothesis we have that $\Theta(Q)\geq \Theta/2.$ Then, gathering \eqref{eq1}, \eqref{eq2} and \eqref{eq3} we get the inequality
\begin{equation}\label{ineq_contradiction}
\frac{C_1^2}{C'_1}+3\frac{C_0}{C_1}\geq \frac{\Theta}{2}.
\end{equation}
Choosing $C_1$ and $C'_1$ big enough, \eqref{ineq_contradiction} gives a contradiction.
\end{proof}
\begin{rem} Using the growth condition for the measure $\mu,$ the condition (\ref{cond_2}) in the statement of Lemma \ref{lemm_cubes} actually implies that $Q'$ and $Q''$ are such that
\begin{equation}\label{eq_masses}
\mu(Q')\approx\mu(Q'')\approx\mu(Q).
\end{equation}
\end{rem}
As a consequence of the proof of Lemma \ref{lemm_cubes}, it is not difficult to see that we can choose $Q'$ and $Q''$ arbitrarily small. This will lead to a contradiction.\\
Given a cube cube $Q$, we define the function $\varphi_Q:={\chi_Q}/{\mu(Q)}^{1/2}.$ We have that $||\varphi_Q||_{L^2(\mu)}=1$ for every cube and that
\begin{equation}
\varphi_{Q_j}\rightharpoonup 0
\end{equation}
weakly in $L^2(\mu)$ for every sequence of cubes $\{Q_j\}_j$ such that $l(Q_j)\to 0.$\\
Now, taking $Q$, $Q'$ and $Q''$ as in Lemma \ref{lemm_cubes}, we can write
\begin{equation}\label{cont_1}
|\langle\C_{\mu}\varphi_{Q'},\varphi_{Q''}\rangle|\leq ||\C_{\mu}\varphi_{Q'}||_{L^2(\mu)}||\varphi_{Q''}||_{L^2(\mu)}=||\C_{\mu}\varphi_{Q'}||_{L^2(\mu)}.
\end{equation}
The proof of the necessity of the density condition of Theorem \ref{main_thm} follows from \eqref{cont_1} if we can prove that $|\langle\C_{\mu}\varphi_{Q'},\varphi_{Q''}\rangle|$ is bounded from below away from $0$; indeed, this would imply that $||\C_{\mu}\varphi_{Q'}||_{L^2(\mu)}$ does not converge to $0$, which contradicts the compactness of the Cauchy transform.
\begin{lemm}Let $Q'$ and $Q''$ be as in Lemma \ref{lemm_cubes}. There exists a constant $c>0,$ independent on the side length of the cubes, such that
\begin{equation}
c \frac{\mu(Q)}{l(Q')} \leq |\langle \C_\mu\varphi_{Q'},\varphi_{Q''}\rangle| .
\end{equation}
\end{lemm}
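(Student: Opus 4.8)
The plan is to reduce the pairing to an absolutely convergent double integral and then to extract a lower bound for its size by controlling the phase of the Cauchy kernel. Since $\varphi_{Q'}$ and $\varphi_{Q''}$ are supported on the disjoint cubes $Q'$ and $Q''$ and the kernel $1/(z-w)$ is bounded on $Q''\times Q'$, no principal value is needed in this off-diagonal pairing, and using that $\varphi_{Q''}$ is real we may write
\begin{equation}
\langle \C_\mu\varphi_{Q'},\varphi_{Q''}\rangle=\frac{1}{\mu(Q')^{1/2}\mu(Q'')^{1/2}}\,I,\qquad I:=\int_{Q''}\int_{Q'}\frac{1}{z-w}\,d\mu(w)\,d\mu(z).
\end{equation}
In view of the relation $\mu(Q')\approx\mu(Q'')\approx\mu(Q)$ from \eqref{eq_masses}, the assertion will follow once we show $|I|\gtrsim \mu(Q')\mu(Q'')/l(Q')$.

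First I would record the geometric consequences of the way $Q'$ and $Q''$ are produced. Both are cubes of side length $l(Q')=l(Q)/C_1$ with $\dist(Q',Q'')\approx l(Q')$, so for every $w\in Q'$ and $z\in Q''$ one has $|z-w|\approx l(Q')$: from below because $|z-w|\geq\dist(Q',Q'')$, and from above because the two cubes lie within a bounded multiple of $l(Q')$ of one another. The decisive point is a separation estimate coming from the proof of Lemma \ref{lemm_cubes}: $Q'$ and $Q''$ are \emph{non-touching} cubes of a common grid of mesh $l(Q')$, hence the distance $D$ between their centres satisfies $D\geq 2\,l(Q')$, while each cube has diameter $\sqrt2\,l(Q')$. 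Writing $u$ for the unit vector from the centre of $Q'$ to that of $Q''$, the set of difference vectors $\{z-w:z\in Q'',\,w\in Q'\}$ is contained in the ball of radius $\sqrt2\,l(Q')$ centred at $Du$, which excludes the origin; thus every $z-w$ lies in the cone of half-angle $\beta$ about $u$, where $\sin\beta\leq \sqrt2\,l(Q')/D\leq 1/\sqrt2$, so $\beta\leq\pi/4$.

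With this cone control the phase of the kernel cannot produce cancellation. Put $\alpha:=\arg u$. For an admissible pair, writing $z-w=|z-w|\,e^{i\psi}$ we have $|\psi-\alpha|\leq\pi/4$, whence
\begin{equation}
\Re\!\left(e^{i\alpha}\,\frac{1}{z-w}\right)=\frac{\cos(\alpha-\psi)}{|z-w|}\geq\frac{\cos(\pi/4)}{|z-w|}\gtrsim\frac{1}{l(Q')}.
\end{equation}
Integrating this pointwise bound over $Q'\times Q''$ gives
\begin{equation}
|I|\geq \Re\!\big(e^{i\alpha}I\big)=\int_{Q''}\int_{Q'}\Re\!\left(e^{i\alpha}\frac{1}{z-w}\right)d\mu(w)\,d\mu(z)\gtrsim\frac{\mu(Q')\mu(Q'')}{l(Q')},
\end{equation}
which is exactly what was required; dividing by $\mu(Q')^{1/2}\mu(Q'')^{1/2}$ and invoking \eqref{eq_masses} yields $c\,\mu(Q)/l(Q')\leq|\langle \C_\mu\varphi_{Q'},\varphi_{Q''}\rangle|$ with $c$ independent of the side length.

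The main obstacle is precisely the possibility of phase cancellation in the complex kernel: in contrast to a positive kernel, the integral $\iint 1/(z-w)$ could a priori be much smaller than $\iint 1/|z-w|$ even though $|z-w|\approx l(Q')$ throughout. The argument above shows this does not occur, but only because the separation $D\geq 2\,l(Q')$ of the centres strictly exceeds $\sqrt2\,l(Q')$, the diameter of the cubes; it is this gap that confines all difference vectors to a cone of aperture strictly less than $\pi$ (indeed $\leq\pi/2$) and thereby forces a genuine lower bound. Hence the non-touching property of grid cubes, rather than merely $\dist(Q',Q'')>0$, is the geometric fact that must be exploited, and verifying the cone estimate quantitatively is the step requiring the most care.
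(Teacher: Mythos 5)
Your proof is correct and follows essentially the same route as the paper: the paper rotates so the centers of $Q'$ and $Q''$ lie on the real axis and bounds $\Re\langle\C_\mu\chi_{Q'},\chi_{Q''}\rangle$ from below using $|z-w|\approx l(Q')$ and positivity of $\Re(z-w)$, which is exactly your multiplication by $e^{i\alpha}$ and phase control. Your cone estimate (centre separation $D\geq 2\,l(Q')$ for non-touching grid cubes versus diameter $\sqrt2\,l(Q')$, giving half-angle at most $\pi/4$) is in fact a more careful quantitative justification of the step the paper states only as ``suppose that $\Re(z-w)>0$.''
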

\begin{proof}Suppose without loss of generality that the centers of the cubes $Q'$ and $Q''$ are aligned with the real axis. By \eqref{eq_masses}, we have that
\begin{equation}\label{e1}
\big|\Re\langle\C_\mu\varphi_{Q'},\varphi_{Q''}\rangle\big|\approx\frac{1}{\mu(Q)}\big|\Re\langle\C_\mu\chi_{Q'},\chi_{Q''}\rangle\big|.
\end{equation}
Suppose that $\Re(z-w)>0$ for every $z\in Q''$ and $w\in Q'.$ Then
\begin{equation}\label{e2}
\big|\Re\langle\C_\mu\chi_{Q'},\chi_{Q''}\rangle\big|=\Big|\Re\int_{Q''}\C_\mu \chi_{Q'}(z)d\mu(z)\Big|=\int_{Q''}\int_{Q'}\frac{\Re(z-w)}{|z-w|^2}d\mu(w)d\mu(z).
\end{equation}
Lemma \ref{lemm_cubes} ensures that, if $z\in Q''$ and $w\in Q',$ we have that $|z-w|\approx l(Q)\approx l(Q'),$ so that, using \eqref{e1},\eqref{e2}
we have
\begin{equation}\label{e3}
|\Re\langle\C_\mu \chi_{Q'},\chi_{Q''}\rangle|\gtrsim \frac{\mu(Q)^2}{l(Q')}.
\end{equation}
The Lemma follows from \eqref{e3} and \eqref{e1}.
\end{proof} 
The following lemma gives a necessary condition for the Cauchy transform of a measure to be compact in terms of the curvature.
\begin{lemm}
Let $\mu$ be a compactly supported positive Radon measure on $\mathbb{C}$ without atoms. Suppose that $\C_{\mu}$ defines a compact operator from $L^2(\mu)$ to $L^2(\mu).$ Then
\begin{equation}\label{cur_cond}
\frac{c^2(\mu\llcorner Q)}{\mu(Q)}\to 0
\end{equation}
as $l(Q)\to 0.$
\end{lemm}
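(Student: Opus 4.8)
The plan is to argue directly, feeding the density information already extracted from compactness into the Melnikov--Verdera curvature identity. First I would record that, by the lemmas just proved, compactness of $\C_\mu$ forces the \emph{uniform} density condition: for every $\delta>0$ there is $r_0>0$ with $\mu(R)\le\delta\, l(R)$ for every cube $R$ with $l(R)<r_0$. The role of this is to control \emph{local} linear growth: if $l(Q)<r_0$ and $\nu:=\mu\llcorner Q$, then every ball $B(x,r)$ satisfies $\nu(B(x,r))\lesssim\delta\, r$ (small balls by the density condition since their diameter is $<r_0$, large balls because $\nu(\mathbb C)=\mu(Q)\lesssim\delta\, l(Q)$), so $\nu$ has linear growth with constant $C_0(\nu)\approx\delta$. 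This is precisely the mechanism that will render the error term in the curvature identity small relative to $\mu(Q)$.

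Next I would apply the Melnikov--Verdera symmetrization identity (see \cite{MV} or \cite[Chapter~8]{tolsa_book}) to the finite measure $\nu$, writing $\C_\epsilon\nu(z):=\int_{|z-w|>\epsilon}(z-w)^{-1}d\nu(w)$. Symmetrizing the three integration variables in $\|\C_\epsilon\nu\|_{L^2(\nu)}^2$ and invoking the pointwise Melnikov identity yields, for every $\epsilon>0$,
\begin{equation}
\|\C_\epsilon\nu\|_{L^2(\nu)}^2=\tfrac16\, c^2_\epsilon(\nu)+E_\epsilon,
\end{equation}
where $c^2_\epsilon(\nu)$ truncates all three pairwise distances at level $\epsilon$, and $E_\epsilon$ collects the triples in which two points are $\epsilon$-close while both lie at distance $>\epsilon$ from the third. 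Splitting that remainder according to the position of the $\epsilon$-close pair and summing over dyadic annuli with the linear growth of $\nu$ gives $|E_\epsilon|\lesssim C_0(\nu)^2\,\nu(\mathbb C)\lesssim\delta^2\mu(Q)$, uniformly in $\epsilon$. Hence
\begin{equation}
\tfrac16\, c^2_\epsilon(\nu)\le\|\C_\epsilon\nu\|_{L^2(\nu)}^2+C\delta^2\mu(Q).
\end{equation}

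I would then let $\epsilon\to0$. On the left, $c^2_\epsilon(\nu)\uparrow c^2(\nu)$ by monotone convergence. On the right, $\C_\epsilon\nu=\C_{\epsilon,\mu}\chi_Q$, and boundedness of $\C_\mu$ gives (via the Cotlar-type bound for the maximal truncation) a dominating function $\C_{*,\mu}\chi_Q\in L^2(\mu)$; since the principal value exists $\mu$-a.e., dominated convergence yields $\|\C_\epsilon\nu\|_{L^2(\nu)}^2\to\|\C\nu\|_{L^2(\nu)}^2\le\|\C_\mu\chi_Q\|_{L^2(\mu)}^2=\mu(Q)\,\|\C_\mu\varphi_Q\|_{L^2(\mu)}^2$, using $\C\nu=\C_\mu\chi_Q$ and $\chi_Q=\mu(Q)^{1/2}\varphi_Q$. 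Dividing by $\mu(Q)$ leaves
\begin{equation}
\frac{c^2(\mu\llcorner Q)}{\mu(Q)}\le 6\,\|\C_\mu\varphi_Q\|_{L^2(\mu)}^2+6C\delta^2.
\end{equation}
Now compactness enters: since $\varphi_Q\rightharpoonup0$ whenever $l(Q)\to0$ and a compact operator sends weakly convergent sequences to strongly convergent ones, $\|\C_\mu\varphi_Q\|_{L^2(\mu)}\to0$ as $l(Q)\to0$, and this holds uniformly over cubes of a given small side length (otherwise a sequence would contradict the sequential statement). Given $\eta>0$, I would first fix $\delta$ with $6C\delta^2<\eta/2$ (fixing $r_0$), then shrink $l(Q)$ further so that $6\|\C_\mu\varphi_Q\|^2<\eta/2$, giving $c^2(\mu\llcorner Q)/\mu(Q)<\eta$, which is \eqref{cur_cond}.

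The main obstacle I anticipate is the error estimate $|E_\epsilon|\lesssim\delta^2\mu(Q)$. The naive bound only gives $|E_\epsilon|\lesssim C_0^2\,\mu(Q)$ with the \emph{global} growth constant $C_0$, which after dividing by $\mu(Q)$ does not vanish; the whole argument hinges on replacing $C_0$ by the local growth constant $\approx\delta$ furnished by the already-established uniform density condition, so that the error is genuinely $o(\mu(Q))$. The remaining work is the routine dyadic bookkeeping for $E_\epsilon$ and the justification of the passage to the limit in $\epsilon$.
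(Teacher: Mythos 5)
Your argument is correct, and the steps you flag as delicate (the localized growth constant, the passage $\epsilon\to 0$) are all handled legitimately; but it runs on a different key lemma than the paper's proof. The paper applies the \emph{exact} Tolsa--Verdera identity (\cite{TV}, Theorem 2) to the measure $\mu\llcorner Q$,
\begin{equation}
||\C_{\mu}\chi_Q||^2_{L^2(\mu\llcorner Q)} =\frac{\pi^2}{3}\int_Q \theta_{\mu}(z)^2\,d\mu(z)+\frac{1}{6}\,c^2(\mu\llcorner Q),
\end{equation}
and observes that the density term vanishes identically, because the previously proved necessity of the density condition gives $\theta_\mu\equiv 0$; after that it finishes exactly as you do ($\varphi_{Q_j}\rightharpoonup 0$, compactness gives $||\C_\mu\varphi_{Q_j}||_{L^2(\mu)}\to 0$, and $||\C_\mu\chi_{Q_j}||^2_{L^2(\mu\llcorner Q_j)}\leq \mu(Q_j)\,||\C_\mu\varphi_{Q_j}||^2_{L^2(\mu)}$), with no truncations, no Cotlar, and no limiting argument. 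You instead use the classical Melnikov--Verdera symmetrization with its $O\bigl(C_0(\nu)^2||\nu||\bigr)$ error, and your crucial observation --- that the error must be computed with the \emph{local} growth constant $\approx\delta$ furnished by the uniform density condition, not the global constant $C_0$, since otherwise it is not $o(\mu(Q))$ --- is precisely what makes the scheme close. The price is the $\epsilon$-bookkeeping: monotone convergence for $c^2_\epsilon$, and Cotlar's inequality plus $\mu$-a.e. existence of principal values (both available here, since $\C_\mu$ is bounded and $\mu$ has linear growth) to pass to the limit on the Cauchy side; this last step is genuinely needed, because weak operator convergence of $\C_{\epsilon,\mu}$ alone gives lower semicontinuity of the norm, which is the wrong direction for your inequality. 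What each approach buys: the paper's route is shorter and needs only pointwise vanishing of the density, but it leans on the deeper Tolsa--Verdera formula; yours uses more elementary inputs (the symmetrization identity with error term) and makes explicit that only an \emph{approximate} relation between curvature and $||\C_\mu\chi_Q||_{L^2}$ is required, at the cost of a longer argument and of invoking the uniform, rather than merely pointwise, density condition.
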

\begin{proof}
Let $Q$ be an arbitrary cube in $\mathbb{R}^2.$
From a formula due to Tolsa and Verdera (see \cite{TV}, Theorem 2) applied to the measure $\mu\llcorner Q,$ we have that
\begin{equation}\label{form_norm}
||\C_{\mu}\chi_Q||^2_{L^2(\mu\llcorner Q)} =\frac{\pi^2}{3}\int_Q \theta_{\mu}(z)^2d\mu(z)+\frac{1}{6}c^2(\mu\llcorner Q).
\end{equation}
Since we suppose $\C$ to be compact, we proved that $\theta_{\mu}(z)=0$ for every $z\in\mathbb{R}^2$, so that the integral in the right hand side of \eqref{form_norm} vanishes.\\
Consider a sequence of cubes $\{Q_j\}_j$ such that $l(Q_j)\to 0$ as $j\to \infty.$ As before, if we define $\varphi_j:={\chi_{Q_j}}/{\mu(Q_j)^{1/2}},$ we have that 
\begin{equation}
\varphi_j\rightharpoonup  0
\end{equation}
weakly in $L^2(\mu).$ Then, since we suppose the Cauchy transform to be compact, we have that 
\begin{equation}
||\C_{\mu}\varphi_j||^2_{L^2(\mu)}\to 0
\end{equation}
for $j\to\infty.$ The inequalities
\begin{equation}
||\C_{\mu}\chi_{Q_j}||^2_{L^2(\mu\llcorner Q_j)}\leq ||\C_{\mu}\chi_{Q_j}||^2_{L^2(\mu)}\leq \mu(Q_j) ||\C_{\mu}\varphi_j||^2_{L^2(\mu)},
\end{equation}
and \eqref{form_norm} conclude the proof of the Lemma.
\end{proof}
\subsection{Sufficient conditions for the compactness.}
The proof that we present now relies on the $T(1)$-theorem of David and Journe. More specifically, we prove that proper truncates of the Cauchy transform are compact operators and, then, we estimate the operator norm of the difference between $\C$ and those truncates.\\
Let $\mu$ be a positive Radon measure with compact support in $\mathbb{C}$. Let $z\in\supp\mu$ and let $Q_z$ be a square containing the support of $\mu$ and centered at $z$. Let $l(Q_z)$ denote its side length. For $j\in\mathbb{N}$ we denote as $Q_j(z)$ the square centered at $z$ and with side-length $2^{-j}l(Q_z)$. Moreover, we define
\begin{equation}
\Delta_j(z):=Q_j(z)\setminus Q_{j+1}(z).
\end{equation}
Exploiting Hilbert-Schmidt's Theorem, a proof analogous to the one of Lemma \ref{disc_lemma} shows that the truncated operator
\begin{equation}
T_jf(z):=\int_{\Delta_j(z)}\frac{f(w)}{z-w}d\mu(w)
\end{equation}
is a compact operator from $L^2(\mu)$ to $L^2(\mu).$ Let us define
\begin{equation}
\C^N_\mu f(w):=\sum_{j=0}^{N-1} T_jf(w)
\end{equation}
and show that, under the hypothesis on the measure reported in the statement of Theorem \ref{main_thm}, it converges in the $L^2(\mu)-L^2(\mu)$ operator norm to the Cauchy transform. This will prove that $\C_{\mu}\in K(L^2(\mu),L^2(\mu)).$\\
The $T(1)-$Theorem (see \cite[Chapter 3]{tolsa_book}) provides the estimate
\begin{equation}\label{T1}
\begin{split}
||\C_\mu-\C^{N-1}_\mu||_{L^2(\mu)\to L^2(\mu)}&\approx \sup_{z\in\supp\mu}\sup_{\widetilde{Q}\subseteq Q_N(z)}\Theta(\widetilde{Q})+\sup_{z\in\supp\mu}\sup_{\widetilde{Q}\subseteq Q_N(z)}\frac{||\C\chi_{\widetilde{Q}}||_{L^2(\mu\llcorner\widetilde{Q})}}{\mu(\widetilde{Q})^{1/2}}\\
&\equiv I_N + II_N.
\end{split}
\end{equation}
First, $I_N\to 0$ as $N\to\infty$ by the hypothesis $(2)$ of Theorem \ref{main_thm} on the density of $\mu$.\\
To show that $II_N\to 0$ as $N\to\infty,$ it suffices to recall formula \eqref{form_norm}, which yields
\begin{equation}
||\C_\mu\chi_{\widetilde{Q}}||^2_{L^2(\mu\llcorner\widetilde{Q})}\lesssim c^2(\mu\llcorner \widetilde{Q}).
\end{equation}
The ratio $c^2(\mu\llcorner \widetilde{Q})/\mu(\widetilde{Q})$ has the correct behavior due to the condition $(2)$ of Theorem \ref{main_thm}. This concludes the proof of the equivalence of the conditions \ref{cond_a} and \ref{cubes}. In order to complete the proof of the theorem, it suffices to observe that the equivalence of \ref{cubes} and \ref{cond_c} follows from \eqref{T1}.

\section{An example: a generalized planar Cantor set}\label{cantor_sec}
As an application of Theorem \ref{main_thm} we analyze the particular case of the planar Cantor sets (see e.g. \cite[p. 87]{garnett}). Let $Q^0:=[0,1]^2$ be the unit square and let $\lambda:=\{\lambda_n\}_{n=1}^\infty$ be a sequence of non-negative numbers such that $0\leq\lambda_n\leq 1/2$ for every $n=1,2,\ldots$. The Cantor set is defined by means of an inductive construction:
\begin{itemize}
\item define $4$ squares $\{Q^1_j\}^4_{j=1}$ of side length $\lambda_1$ such that each one of them contains a distinct vertex of $Q_0$ and call $E_1:=\cup^4_{j=1}Q^1_j.$
\item iterate the first step for each of the $4$ cubes but using $\lambda_2$ as a scaling factor. As a result we get $2^4=16$ squares of side length $\sigma_2=\lambda_1\lambda_2.$ We denote those squares as $\{Q^2_j\}_j.$ Then, define the second-step approximation of the Cantor set as $E_2:=\cup^{2^4}_{j=1}Q^2_j.$
\item as a result of $n$ analogous iterations, at the $n-$th step we get a collection of $4^n$ cubes $\{Q^n_j\}_j$ whose side length is $\sigma_n:=\prod^{n}_{j=1}\lambda_j$ and a set $E_n:=\cup^{4^n}_{j=1}Q^n_j.$
\end{itemize}
The planar Cantor set is defined as
\begin{equation}
E=E(\lambda):=\bigcap^\infty_{n=1}E_n.
\end{equation}
We denote by $p$ the canonical probability measure associated with $E(\lambda).$ In particular, $p$ is uniquely identified by imposing that $p(Q)=4^{-n}$ for every square that composes $E_n.$
We denote by $\C_{p}$ the Cauchy transform associated with the measure $p$.\\
Let $\theta_k:=2^{-k}\sigma_k^{-1}$. It is known (see e.g. \cite{tolsa_book}, Lemma 4.29) that for the probability measure on the Cantor set, it holds that
\begin{equation}
c^2_p(x)\approx \sum_{k=0}^\infty \theta^2_k
\end{equation}
for every $x\in E(\lambda)$.\\
As a consequence of Theorem \ref{main_thm}, $\mathcal{C}_p$ is compact from $L^2(p)$ to $L^2(p)$ if and only if $\sum_{k=0}^\infty \theta^2_k$ converges. This condition holds if and only if $\mathcal{C}_p$ is bounded from $L^2(p)$ to $L^2(p)$ (see \cite{mat_tolsa_verd}).

\section{A counterexample to Theorem \ref{main_thm} for other kernels}
A natural question is to ask if any analogue Theorem \ref{main_thm} holds also for other singular integral operators of the form
\begin{equation}
Tf(z)=\int_{\mathbb{C}} K(z,w)f(w)d\mu(w),
\end{equation}
where $K$ is a kernel in a proper class and the singular integral operator has to be understood in the usual sense.
For a kernel good enough so that the T(1)-theorem applies, similar considerations as the ones for the sufficiency in the proof of Theorem \ref{main_thm} apply. In particular, in order to have $T$ is compact from $L^2(\mu)$ to $L^2(\mu)$ it suffices to require
\begin{enumerate}
\item $\Theta^*_{\mu}(z)=0$ for every $z\in\mathbb{C}.$\label{d_con}
\item ${||T\chi_Q||^2_{L^2(\mu\llcorner Q)}}/{\mu(Q)}\to 0$ as $l(Q)\to 0.$ 
\item ${||T^*\chi_Q||^2_{L^2(\mu\llcorner Q)}}/{\mu(Q)}\to 0$ as $l(Q)\to 0.$
\end{enumerate}
However, these conditions turn out not to be necessary even in easy cases. An immediate example that shows that the density condition (\ref{d_con}) is not necessary is the operator with kernel
\begin{equation}
K(z,w)=\frac{\Im(z-w)}{|z-w|^2}
\end{equation}
and the measure $\mu=\mathcal{H}^1\llcorner ((0,1)\times \{0\}).$\\
This operator (trivially) belongs to 
$K(L^2(\mu),L^2(\mu))$ even though $\mu$ has positive linear density at each point of $(0,1)\times \{0\}$.
\label{Bibliography}


\begin{thebibliography}{90}

		\bibitem{Adams}
		R. Adams, \textit{Sobolev spaces.} Pure and Applied Mathematics, Vol. 65. {Academic Press}, New York-London, 1975. 

		\bibitem{Anderson_Hinkkanen}
		J. M. Anderson and A. Hinkkanen, \textit{The Cauchy Transform on Bounded domains.}  {Proceedings of the American Mathematical Society, Vol. 107, No. 1} (Sep., 1989), pp. 179-185.
		
		\bibitem{brezis}
		H. Brezis, \textit{Functional analysis, Sobolev spaces and partial differential equations.} Universitext. {Springer, New York,} 2011. 
		
		\bibitem{david_wavelets}
		
		G. David, \textit{Wavelets and singular integrals on curves and surfaces}. Lecture Notes in Mathematics, 1465. \textit{Springer-Verlag}, Berlin, 1991.
		
		\bibitem{garnett}
		J. Garnett, \textit{Analytic capacity and measure.} Lecture Notes in Mathematics, Vol. 297. Springer-Verlag, Berlin-New York, 1972.
		
		\bibitem{leger}
		
		J.C. L\'eger,  \textit{Menger curvature and rectifiability.} {Ann. of Math.} (2) (1999), \textit{no.} 3, pp. 831–-869.

		\bibitem{Lip1}
		J. Mateu and X. Tolsa, \textit{Riesz transforms and harmonic $\text{Lip}_1$ capacity in Cantor sets.} {Proc. London Math. Soc. 89(3)} (2004), pp. 676--696.
		
		\bibitem{mat_tolsa_verd}
		 J. Mateu, X. Tolsa, J. Verdera,\textit{The planar Cantor sets of zero analytic capacity and the local $T(b)$-theorem.}{J. Amer. Math. Soc. 16} (2003), no. 1, 19–28. 
		 
		 \bibitem{mattila_verdera_jems}
		 P. Mattila and J. Verdera, \textit{Convergence of singular integrals with general measures.} {J. Eur. Math. Soc. (JEMS)}, 11(2):257–271, 2009.

		\bibitem{melnikov}
		
		 M. S. Melnikov, \textit{Analytic capacity: a discrete approach and the curvature of measure.} (Russian) {Mat. Sb.} 186 (1995), no. 6, 57--76; translation in \textit{Sb. Math.} 186 (1995), no. 6, 827–846.


		\bibitem{MV}
		 M. S. Melnikov and J. Verdera, \textit{A geometric proof of the $L^2$ boundedness of the
 Cauchy integral on Lipschitz graphs}, {Inter. Math. Res. Not. 7} (1995), 325-331.
 
 		\bibitem{NTV}
 		 F. Nazarov, X. Tolsa, A. Volberg, \textit{On the uniform rectifiability of AD-regular measures with bounded Riesz transform operator: the case of codimension 1.} {Acta Math.} 213 (2014),  no. 2, 237–321.
 
		\bibitem{Stein}
		E. M. Stein, \textit{Singular Integrals and Differentiability Properties of Functions}, Princeton U. Press, 1970.
		
		\bibitem{tolsa_book}
		X. Tolsa, \textit{Analytic capacity, the Cauchy transform, and non-homogeneous Calder\'on-Zygmund theory}. Progress in Mathematics, 307. {Birkh\"auser/Springer, Cham.}, 2014.
		
		\bibitem{TV}
		X. Tolsa and J. Verdera, \textit{May the Cauchy transform of a non-trivial finite measure vanish on the support of the measure?} {Ann. Acad. Sci. Fenn. Math.} 31 (2006), no. 2, 479–-494.
		
		\bibitem{villa}
		
	 P. Villarroya, A characterization of compactness for singular integrals,\textit{ J. Math. Pures Appl.} (9) 104 (2015), no. 3, 485–-532. 
		
		
		
\end{thebibliography}
\end{document}